\newtheorem{theorem}{Theorem}
\newtheorem{lemma}{Lemma}
\newtheorem{remark}{Remark}
\newtheorem{corollary}{Corollary}
\title{On the number of heterochromatic trees in nice and beautiful colorings of complete graphs\thanks{Partially supported by CONACyT, M\'exico, and PAPIIT, UNAM.}}
\author{Juan Jos\'e Montellano-Ballesteros\thanks{Instituto de Matem\'aticas, Universidad Nacional Aut\'onoma de M\'exico} \and Eduardo Rivera-Campo\thanks{Departamento de Matem\'aticas, Universidad Aut\'onoma Metropolitana-I} \and Ricardo Strausz\thanks{Instituto de Matem\'aticas, Universidad Nacional Aut\'onoma de M\'exico} }
\date{}
\begin{document}

\maketitle

\abstract{We introduce classes of  edge-colourings of the complete graph
 --- that we call nice and beautiful --- and study  how many heterochromatic spanning trees appear under such colourings. We prove that if the colouring is nice, there is at least a quadratic number of different heterochromatic trees; and if the colouring  is beautiful there is an exponential number of different such trees.}

\section{Introduction}

Let $G=(V,E)$ be a simple graph of order $n+1$ and size $m \geq n$. Consider an injective labeling $\gamma:V\to\{0,\dots,m\}$ of its vertices and assign to each edge $e=uv$ the number $\gamma'(e)=|\gamma(u)-\gamma(v)|$. If every edge is assigned to a different number, we say that $\gamma$ is a {\it graceful labeling\/}, and if such a labeling exists we call $G$ a {\it graceful graph\/}. The long-standing Graceful Tree Conjecture ---also known as Ringel-Kotzig-Rosa conjecture \cite{Rin63, K73, Ro67}, or RKR-conjecture for short--- says that all trees are graceful graphs.

An $m$-edge-colouring of a graph $G$ is an assigment  of colours to the edges of $G$ that uses $m$ colours. Equivalently, an $m$-edge-colouring of a graph $G$ is a partition $E=C_1\sqcup C_2\sqcup \ldots\sqcup C_m$ of the edge set of $G$; the sets $C_1, C_2, \ldots, C_m$ are called the {\it colour classes}.  A subgraph $H$ of an edge-coloured graph $G$ is {\it heterochromatic} if all edges in $H$ have different colours.

If we label the vertices of the complete graph $K_{n+1}$ with the numbers $\{0,1,\dots,n\}$ and assign to each edge the absolute value of the difference between the labels of its vertices, we get a colouring of the edges with $n$ colours which we call {\it the graceful colouring\/} of the complete graph ---we refer to the numbers in $[n]=\{1,2,\dots,n\}$ as colours when assigned to the edges of the complete graph. In terms of this colouring, the RKR-conjecture says that there is an {\it heterochromatic\/} copy of each tree in the graceful colouring of the complete graph; that is, for each tree of order $n+1$, there is an isomorphic copy of it, inside the gracefully coloured $K_{n+1}$, which uses $n$ different colours in its edges. This ``beautiful'' colouring has several properties which we use along the paper to prove the existence of ``many'' heterochromatic trees.

To begin with, observe that in the graceful colouring the colour $c\in[n]$ is used exactly $n-c+1$ times; 
so the $n$  colour classes have sizes $n, n-1, \ldots, 2, 1$, respectively. An $n$-colouring of the edges of the complete graph $K_{n+1}$ with such a property is called a {\it nice\/} colouring. For example, consider the following recursive ``Stellar'' colouring of a complete graph: start with a vertex; then, at each step, add a new vertex and colour all edges joining it to the previously added vertices with the same and new colour --- that is, at each step, add a monochromatic star, see Fig. \ref{stellar}. Clearly this is a nice colouring; furthermore, it is easy to see that this nice colouring has a heterochromatic copy of each tree of size $n$.

\begin{figure}[h!]
\centering
  \includegraphics[width= 2in]{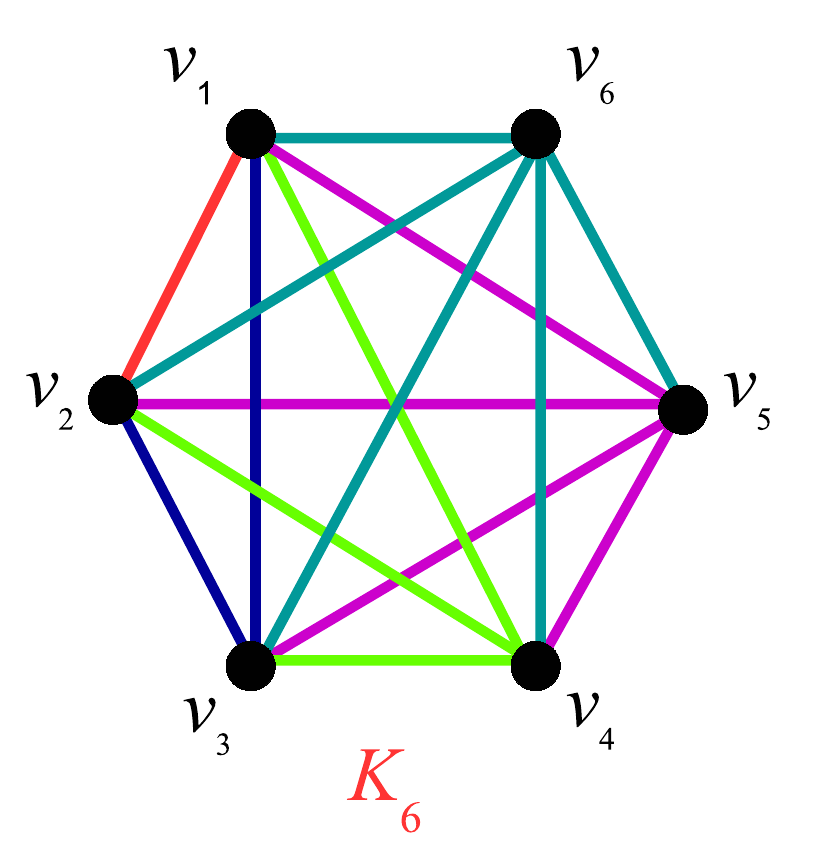}
  \caption{Stellar colouring of $K_6$.}
  \label{stellar}
\end{figure}

The graceful and stellar colourings of the complete graph  $K_{n+1}$ share many common properties: in both cases all monochromatic subgraphs are acyclic and $K_{n+1}$  can be decomposed into heterochromatic stars to mention just a couple. In Section 3 we show for every nice edge colouring of the complete graph $K_{n+1}$, there are $\Omega(n^2)$ different heterochromatic spanning trees. 

Later, in Section \ref{sectionbeautiful} we define the class of {\it beautiful\/} edge-colourings of $K_{n+1}$, which includes the class of nice colourings, and prove that for any such a colouring, the graph $K_{n+1}$ has $\Omega(2^n)$ different heterochromatic spanning trees. 

The existence of heterochromatic trees in edge colourings has been studied by various authors. {In particular  K. Suzuki  \cite{Su06} and S. Akbari and A. Alipour \cite{AkAl06} characterise independently those edge colourings that contain heterochromatic trees; viz.

%\begin{theorem}
%\label{Suzuki06}
\medskip
\noindent{\bf Theorem A (K. Suzuki). } {\it Let $C$ be an edge colouring of a graph $G$ with $n$ vertices. There is a heterochromatic spanning tree of $G$ if and only if for any set of $r$ colours of $C$ with $1\leq r \leq n-2$, the graph obtained from $G$ by removing all edges of $G$ coloured with any of these $r$ colours has at most $r+1$ connected components.
}\medskip
%\end{theorem}

%\begin{theorem}
%\label{AA06}
\medskip
\noindent{\bf Theorem B (S. Akbari, A. Alipour). } {\it Let $G$ be an edge-coloured graph. Then $G$ has a heterochromatic spanning tree if and only if for every partition of $V(G)$ into $t$ parts, with $1 \leq t \leq \vert V(G) \vert$, there are at least $t-1$ edges with distinct colours whose ends lie in different parts.
}\medskip
%\end{theorem}

Other results concerning the complete graph have been given too. For example, colourings given by perfect matchings of the complete graph $K_{2n}$ were studied by R. A. Brualdi and S. Hollingsworth \cite{BH96}. 

%\begin{theorem}
%\label{Brualdi96}
$\bullet\quad$ {\it If $C$ is an edge colouring of the complete graph $K_{2n}$ given by $2(n-1)$ disjoint perfect matchings, then $K_{2n}$ contains two edge-disjoint heterochromatic trees.}
%\end{theorem}

An upper bound for the minimum integer $k$ such that for every edge $k$-colouring of the complete graph $K_n$ there exists a heterochromatic spanning tree, was found by A. Bialostocki and W. Voxman \cite{BV01}.

%\begin{theorem}
%\label{Bialostocki}
$\bullet\quad$ {\it If $C$ is an edge colouring of the complete graph $K_n$ with at least $\binom{n-2}{2} +2$ colours, then $K_n$  has a heterochromatic} spanning tree.
%\end{theorem}

This last proposition can be generalised in various directions. For examples, J. Arocha and V. Neumann \cite{AN} generalised Bialostocki's result for arbitrary graphs and J. J. Montellano-Ballesteros and E. Rivera-Campo \cite{MR13} gave the corresponding result for matroids.

%\begin{theorem}
%\label{AN}
$\bullet\quad$ {\it Let $G$ be a simple connected graph with with $m \geq 2$ edges. If $C$ is an edge colouring of $G$  with exactly $m - \tau (G) + 2$ colours, then $G$ has  a heterochromatic spanning tree.}
%\end{theorem}

%\begin{theorem}
%\label{Montellano-Rivera}
$\bullet\quad$ {\it Let $M$ be a matroid with $m$ elements and rank at least 2. If $C$ is a colouring of the elements of $M$ with at least $m - \tau (M) + 2$ colours, then $G$ has a heterochromatic basis.}
%\end{theorem}

Were, $\tau (G)$ (respectively $\tau (M)$) denote the size of the smallest set of edges of $G$ (elements of $M$) which contains at least two edges (elements) of each spanning tree of $G$ (each basis of $M$).

\section{Preliminary results}

Let $n \geq 0$ be an integer and $G$ be a graph with $n+1$ vertices. For an $n$-edge-colouring of $G$ let $M_1 = (E(G), \mathcal{I}_1)$ and $M_2 = (E(G), \mathcal{I}_2)$ be matroids with ground set $E(G)$ and independence sets $\mathcal{I}_1$ and $\mathcal{I}_2$, respectively, where $X\in \mathcal{I}_1$ if the subgraph $G[X]$ of $G$ induced by $X$ is acyclic and $X\in \mathcal{I}_2$ if $G[X]$ is heterochromatic. A common independent set in $M_1$ and $M_2$ is the edge set of a heterochromatic forest of $G$.

For $X\subset E(G)$ we denote by $w(X)$ and $c(X)$, respectively,  the number of connected components of the spanning subgraph of $G$ with edge set $X$ and the number of colour classes contained in $X$. 

The following lemma will be used in the proofs of our main results. A proof of the lemma can be obtained using Suzuki's Theorem but, for the sake of completeness, we present an alternative proof that uses Edmond's Matroid Intersection Theorem \cite{Ed70}.

\begin{lemma}
\label{lemabasico}
Let $C$ be an $n$-edge-colouring of a graph $G$ with $n+1$ vertices. If $w(X) + c(X) \leq n+1$ for all $X \subset E(G)$, then $G$ has a heterochromatic spanning tree.
\end{lemma}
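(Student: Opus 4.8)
The plan is to apply Edmonds' Matroid Intersection Theorem to the two matroids $M_1$ and $M_2$ introduced above, whose common independent sets are precisely the edge sets of heterochromatic forests of $G$. A heterochromatic spanning tree is a common independent set of size $n$, since a spanning tree of a graph on $n+1$ vertices has exactly $n$ edges; and no common independent set can be larger, because every such set is a forest on $n+1$ vertices. Hence it suffices to show that the maximum size of a common independent set is at least $n$, and by the Intersection Theorem this maximum equals $\min_{A \subseteq E(G)} \bigl( r_1(A) + r_2(E(G)\setminus A) \bigr)$, where $r_1$ and $r_2$ are the rank functions of $M_1$ and $M_2$.

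First I would write down the two rank functions explicitly. Since $M_1$ is the cycle matroid of $G$, for $A \subseteq E(G)$ the rank $r_1(A)$ equals $(n+1) - w(A)$, the number of vertices minus the number of components of the spanning subgraph with edge set $A$. Since $M_2$ is the partition matroid associated with the colour classes (capacity one per colour), $r_2(B)$ equals the number of distinct colours appearing among the edges of $B$.

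The key step is to relate $r_2(E(G) \setminus A)$ to the quantity $c(A)$ in the hypothesis. A colour is absent from $E(G) \setminus A$ exactly when its entire colour class is contained in $A$; there are $c(A)$ such colours, so $r_2(E(G) \setminus A) = n - c(A)$. Combining the two expressions gives
\[
r_1(A) + r_2(E(G)\setminus A) = \bigl( (n+1) - w(A) \bigr) + \bigl( n - c(A) \bigr) = 2n + 1 - \bigl( w(A) + c(A) \bigr).
\]
Applying the hypothesis $w(A) + c(A) \leq n+1$ (taking $X = A$) to each term of the minimum then yields $r_1(A) + r_2(E(G)\setminus A) \geq n$ for every $A$, so the minimum, and therefore the maximum common independent set, has size at least $n$. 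The resulting common independent set of size $n$ is a spanning forest with $n$ edges on $n+1$ vertices, hence a spanning tree, and it is heterochromatic by independence in $M_2$, which completes the argument.

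The computation itself is short; the only point requiring care is the complementation identity $r_2(E(G)\setminus A) = n - c(A)$, which is what converts the combinatorial hypothesis, phrased in terms of colour classes fully contained in a set, into the rank inequality demanded by Edmonds' theorem. Everything else is bookkeeping, so I do not expect a genuine obstacle beyond setting up the two matroids correctly and confirming that the bound on common independent sets is tight at $n$.
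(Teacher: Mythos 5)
Your proposal is correct and follows essentially the same route as the paper: Edmonds' Matroid Intersection Theorem applied to the cycle matroid and the colour-partition matroid, with the rank computations $r_1(X) = n+1-w(X)$ and $r_2(E(G)\setminus X) = n - c(X)$ yielding the bound $r_1(X) + r_2(E(G)\setminus X) \geq n$ and hence a common independent set of size $n$. Your explicit justification of the complementation identity (a colour is missing from $E(G)\setminus X$ precisely when its whole class lies in $X$) is a point the paper states without comment, but the argument is otherwise identical.
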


\begin{proof}
Let $X \subset E(G)$. Then $r_{M_1}(X) = n +1 - w(X)$ and $r_{M_2}(E(G) \setminus X) = n - c(X)$. Therefore
\begin{align*}
    r_{M_1}(X) + r_{M_2}(E(G) \setminus X) &= (n+1-w(X)) + (n - c(X))\\
    &=(2n+1) - (w(X) + c(X)) \\
    &\geq (2n+1) - (n+1)\\
    &= n.
\end{align*}

By Edmonds' Matroid Intersection Theorem , $E(G)$ contains a set $X^*$ with size $n$ which is independent in both matroids $M_1$ and $M_2$. This implies that the subgraph of $G$ induced by $X^*$ is a heterochromatic spanning tree of $G$.

\end{proof}

\section{Heterochromatic spanning trees in cute and nice colourings of graphs}
\label{sectionniceandcute}

An $n$-edge-colouring of the complete graph $K_{n+1}$ is a \emph{nice colouring} if the $n$  colour classes have sizes $1, 2, \ldots, n$. Let $G$ be a graph with $n+1$ vertices and $1 + {n \choose 2}$ edges. An $n$-edge-colouring of $G$ is a \emph{cute} edge-colouring of $G$ if the sizes of the $n$  colour classes are $1, 1, 2, \ldots, n-1$.

With Lemma \ref{lemabasico} in hand, we prove the following theorems concerning cute and nice colourings.

\begin{theorem}
\label{cute} 
Let $G$ be a graph with $n+1$ vertices and $1 + {n \choose 2}$ edges. 
If $C$ is a cute $n$-edge-colouring of $G$, then $G$ has a heterochromatic spanning tree.
\end{theorem}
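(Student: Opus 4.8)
The plan is to apply Lemma~\ref{lemabasico}, so it suffices to verify its hypothesis: that $w(X) + c(X) \le n+1$ for every $X \subseteq E(G)$. The first move I would make is to rewrite this inequality using the rank of $X$ in the cycle matroid $M_1$. Since $G$ has $n+1$ vertices, the spanning subgraph with edge set $X$ has $w(X) = (n+1) - r(X)$ components, where $r(X)$ denotes the number of edges in a spanning forest of $(V(G),X)$ (this is exactly the identity $r_{M_1}(X) = n+1-w(X)$ used in the proof of Lemma~\ref{lemabasico}). Substituting, the target inequality collapses to
\[
c(X) \le r(X),
\]
so the whole theorem reduces to showing that the number of colour classes \emph{entirely} contained in $X$ never exceeds the rank of $X$.

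To bound $c(X)$, I would set $j = c(X)$ and let $Y \subseteq X$ be the union of the $j$ colour classes that are contained in $X$. Since $r(X) \ge r(Y)$, it is enough to prove $r(Y) \ge j$. Here is where the cute sizes enter. Because the colour classes have sizes $1,1,2,\ldots,n-1$, any $j$ of them contain at least as many edges as the $j$ smallest classes, so a short computation gives $|Y| \ge 1 + 1 + 2 + \cdots + (j-1) = 1 + \binom{j}{2}$.

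The key tool I would then invoke is the elementary fact that a subgraph of rank $r$ has at most $\binom{r+1}{2}$ edges, the extremal case being a clique on $r+1$ vertices. This follows from the super-additivity $\binom{a+1}{2} + \binom{b+1}{2} \le \binom{a+b+1}{2}$ (the difference being $ab \ge 0$), which lets one merge the components of $Y$ one at a time without decreasing the edge count. Applying this to $Y$: if we had $r(Y) \le j-1$, then $|Y| \le \binom{j}{2}$, contradicting $|Y| \ge 1 + \binom{j}{2}$. Hence $r(Y) \ge j$, and therefore $r(X) \ge j = c(X)$, which is precisely what is needed; Lemma~\ref{lemabasico} then produces the heterochromatic spanning tree.

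I expect the crux — and the only place where ``cute'' is genuinely used — to be the interplay in this last step. The $j$ smallest cute classes carry exactly $1 + \binom{j}{2}$ edges, that is, precisely one more than the maximum number of edges a rank-$(j-1)$ subgraph can hold. That single extra edge is what forces the rank up to $j$ and makes the margin in Lemma~\ref{lemabasico} close exactly, with equality attainable. Everything else is bookkeeping: translating the component count into rank, and observing that choosing non-minimal classes only enlarges $|Y|$ and so never hurts the estimate.
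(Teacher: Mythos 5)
Your proposal is correct and takes essentially the same route as the paper: the paper likewise verifies the hypothesis of Lemma~\ref{lemabasico} by playing the lower bound $\vert X \vert \geq 1 + \binom{c(X)}{2}$ coming from the cute class sizes against the upper bound $\binom{n+2-w(X)}{2}$ on the size of a graph on $n+1$ vertices with $w(X)$ components, which is exactly your bound $\binom{r+1}{2}$ rewritten via $r(X) = n+1-w(X)$. Your restriction to the subset $Y$ of contained colour classes is a harmless cosmetic refinement of the paper's direct estimate on $X$.
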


\begin{proof}
Let $C_1, C_2, \ldots, C_{n}$ be the  colour classes of $C$. Without loss of generality we assume $\vert C_1 \vert = 1$ and $\vert C_i \vert = i-1$ for $i=2, 3, \ldots n$.
Let $X \subset E(G)$ and denote by $E_1, E_2, \ldots, E_{w(X)}$ the sets of edges of the connected components of the spanning subgraph of $G$ with edge set $X$. Then $$\vert X \vert = \sum_{i=1}^{w(X)} \vert E_i \vert \leq  {n+2 -w(X) \choose 2}$$ since the number of edges of a graph is maximum when all edges lie in one connected component.
    
On the other hand if $C_{i_1}, C_{i_2}, \ldots, C_{i_{c(X)}}$ are the  colour classes contained in $X$, then
\begin{align*}
    \vert X \vert &= \sum_{i=1}^{n} \vert X \cap C_{i} \vert\\
    &\geq \sum_{j=1}^{c(X)} \vert C_{i_j} \vert\\
    &\geq 1 + 1+ 2+ \ldots + (c(X) - 1)\\
    &= 1 + {c(X) \choose 2}.
\end{align*}
Therefore 
$${n+2-w(X) \choose 2} \geq 1 + {c(X) \choose 2}$$
This implies $${n+2-w(X) \choose 2} > { c(X) \choose 2}$$ which in turn gives $n+2-w(X) > c(X)$ and therefore $w(X) + c(X) \leq n+1$. By Lemma \ref{lemabasico}, $G$ has a heterochromatic spanning tree.

\end{proof}

The following remark shows that the condition in Theorem \ref{cute} can only guarantee the existence of one heterochromatic spanning tree, see Fig. \ref{uniquetree}.

\begin{figure}[h!]
\centering
  \includegraphics[width= 4in]{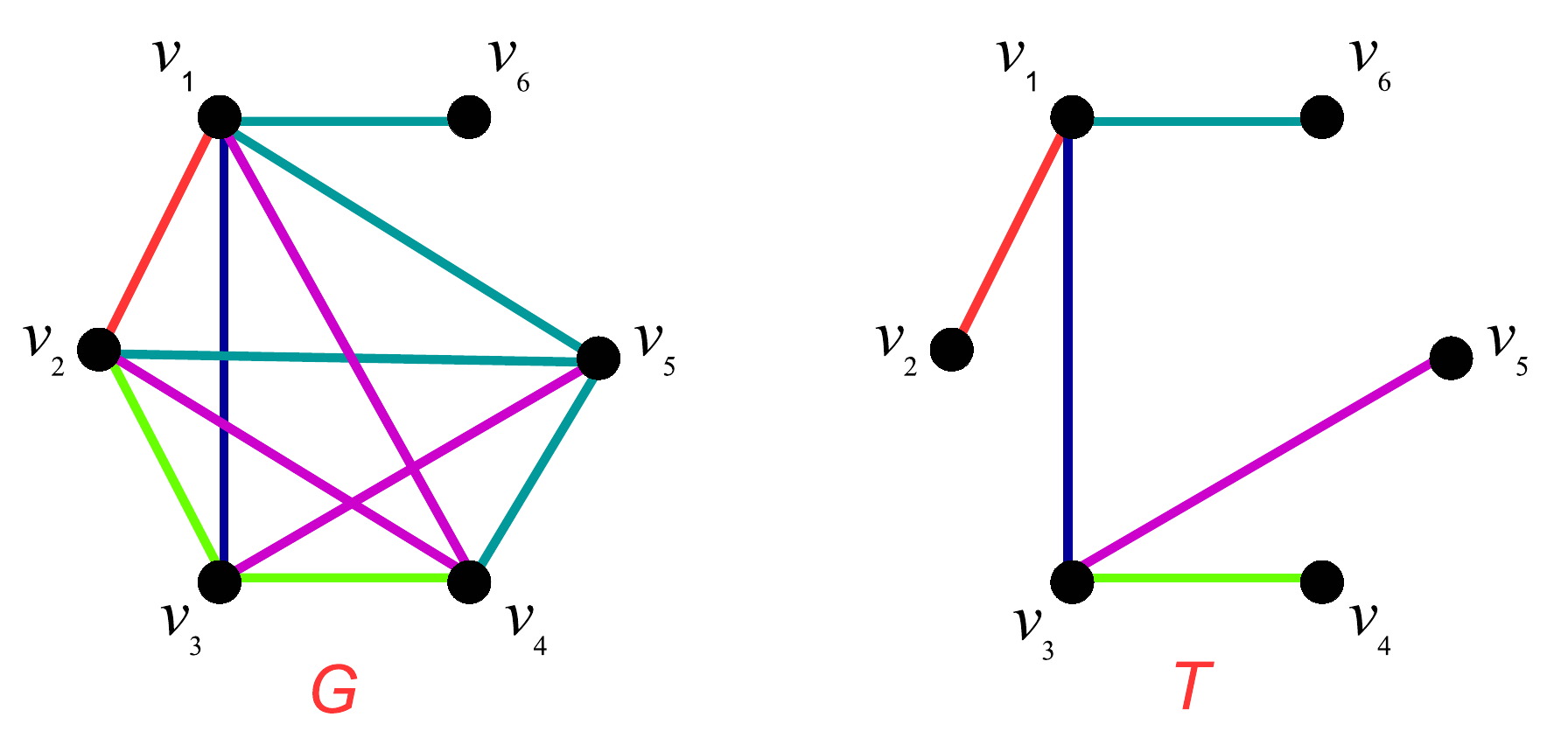}
  \caption{Cute colouring of a graph $G$ with exactly one heterochromatic spanning tree $T$.}
  \label{uniquetree}
\end{figure}

\begin{remark}
\label{remarkunique}
For each tree $T$ with $n+1$ vertices, there is a spanning supergraph $G(T)$ of $T$ with  $1 + \binom{n}{2}$ edges and a cute edge-coloring of $G(T)$ for which $T$ is the unique  heterochromatic spanning tree.

\end{remark}

\begin{proof}

Let $T$ be a tree with vertices $v_1, v_2, \ldots, v_{n+1}$. Without loss of generality assume that for $i=1, 2, \ldots, n+1$, the subgraph $T_i$ of $T$ induced by $v_1, v_2, \ldots, v_i$ is a tree. For $i=2, 3, \ldots, n+1$ let $v_i^-$ be the unique vertex of $T_{i-1}$ adjacent to $v_i$ in $T_i$. Then $E(T) = \{v_i^-v_i : i = 2, 3, \ldots, n+1\}$.

Let $G(T)$ be the supergraph of $T$ with edge set 
$$E(G(T)) =  \{v_iv_j: 1\leq i \leq j \leq n\} \cup \{v_{n+1}^-v_{n+1}\};$$ 
clearly $G(T)$ is a spanning supergraph of $T$ with $1 + \binom{n}{2}$ edges. 

Let $C$ be the edge-colouring of $G(T)$ with  colour classes $C_1, C_2, \ldots, C_{n}$ given by: $C_1 = \{v_1v_2\} = \{v_2^-v_2\}$ and for $k=2,3,  \ldots, n$,  $C_{k} = \{v_{k+1}^-v_{k+1}\} \cup \{v_iv_k: i=1, 2, \ldots k-1, v_i \neq v_k^-\}$. Notice that  $C$ is a cute edge-colouring of $G(T)$ since $\vert C_1\vert = 1$ and $\vert C_k \vert = 1 + (k-2) =k-1$ for $k=2, 3, \ldots, n$. We claim that $T$ is the only spanning tree of $G(T)$ which is heterocromatic.

Let $H$ be a heterocromatic spanning tree of $G(T)$. Edges $v_2^-v_2$ and $v_3^-v_3$ are the  unique edges in $C_1$ and $C_2$, respectively, therefore they must be edges of $H$. This implies that $T_1$ and $T_2$ are subtrees of $H$. Assume $T_k$ is a subtree of $H$; since $C_{k} = \{v_{k+1}^-v_{k+1}\} \cup \{v_iv_k: i=1, 2, \ldots k-1, v_i \neq v_k^-\}$ and all edges $v_iv_k: i=1, 2, \ldots k-1$ have both ends in $T_k$, the only possible edge in $C_k$ that lies in $H$ is edge $v_{k+1}^-v_{k+1}$. Therefore $T_{k+1}$ is a subtree of $H$. This inductive argument shows that $T = T_{n+1}$ is a subtree of $H$ which imples $H = T$.

\end{proof}

Unlike the stellar colouring, not every nice colouring of $K_n$ contains a heterochromatic copy of every tree witn $n$ vertices, see Fig. \ref{nostars}. Nevertheless every nice colouring of $K_n$ produces many different heterochromatic spanning trees.

\begin{figure}[h!]
\centering
  \includegraphics[width= 2in]{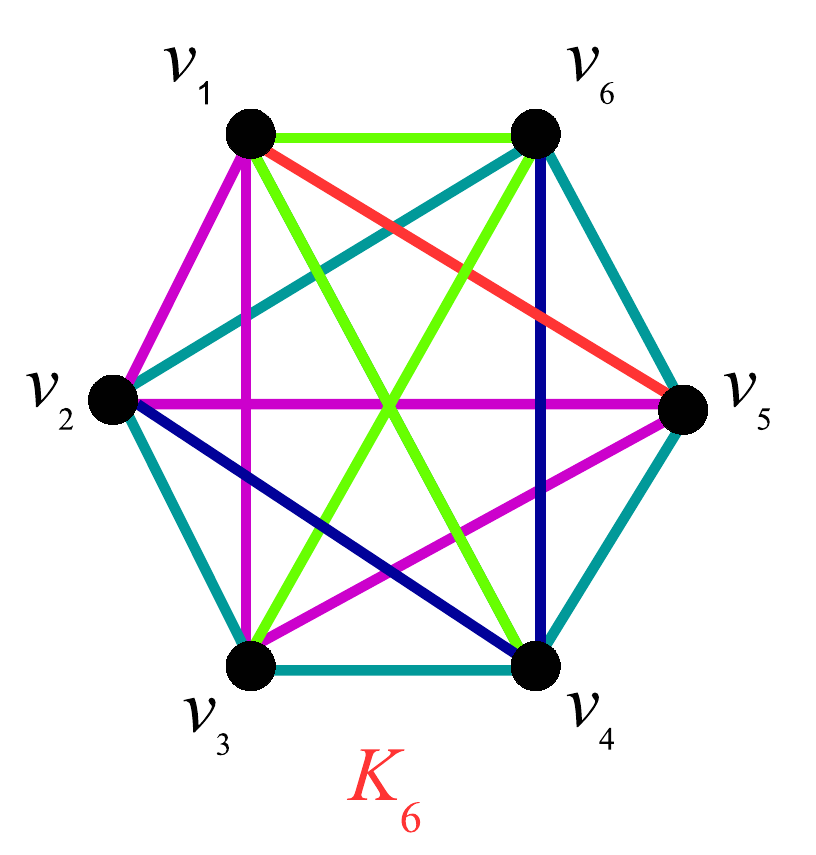}
  \caption{A nice colouring of $K_6$ with no heterochromatic spanning tree $T$ with $\Delta (T) \geq 4$.}
  \label{nostars}
\end{figure}

\begin{theorem}
\label{nice}
Let $G$ be a complete graph with $n+1$ vertices. If $C$ is a nice $n$-edge-colouring of $G$, then $G$ has at least $\lceil \frac{n+1}{2} \rceil \lfloor \frac{n+1}{2}  \rfloor$ different heterochromatic spanning trees.
\end{theorem}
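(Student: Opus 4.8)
The plan is to combine the reduction to cute colourings behind Theorem~\ref{cute} with a two-parameter counting argument. The starting point is the observation that, since a nice colouring uses class sizes $1,2,\dots,n$ and a heterochromatic spanning tree has exactly $n$ edges, every heterochromatic spanning tree of $G$ uses each colour exactly once; hence it contains precisely one edge of each class $C_i$, and the edge it selects from a fixed class is a function of the tree. In particular the unique edge of the size-$1$ class $C_1$ lies in every heterochromatic spanning tree, and two heterochromatic spanning trees that use different edges in some common class are automatically distinct. First I would record the warm-up bound: deleting all but one edge of the largest class $C_n$ turns the nice colouring into a cute one (profile $1,1,2,\dots,n-1$), so Theorem~\ref{cute} produces a heterochromatic spanning tree containing any prescribed edge of $C_n$; letting the retained edge range over the $n$ edges of $C_n$ yields $n$ pairwise distinct trees. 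This is only linear, so the real work is to gain a second degree of freedom.

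To reach the quadratic bound I would index heterochromatic spanning trees by the pair $(e,f)$, where $e\in C_n$ is the edge the tree uses in the largest class and $f\in C_{n-1}$ is the edge it uses in the second largest class. By the observation above, distinct pairs give distinct trees, so it suffices to realise \emph{many} pairs. For a prescribed pair $(e,f)$ one attempts to build a heterochromatic spanning tree through both $e$ and $f$ by deleting $C_n\setminus\{e\}$ and $C_{n-1}\setminus\{f\}$ and invoking Lemma~\ref{lemabasico} on the resulting colouring (of profile $1,1,1,2,\dots,n-2$). Since the full product $|C_n|\,|C_{n-1}|=n(n-1)$ is about four times the target $\lceil\frac{n+1}{2}\rceil\lfloor\frac{n+1}{2}\rfloor$, there is ample room to discard the pairs that cannot be realised and still clear the bound; note also that the extremal colourings (such as that of Remark~\ref{remarkunique}-type constructions) force most classes to have a determined representative, which is consistent with only two genuine factors surviving.

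The hard part is precisely that not every pair is realisable: forcing three colour classes down to single edges can violate the hypothesis $w(X)+c(X)\le n+1$ of Lemma~\ref{lemabasico}. Concretely, if the single edges of $C_1$, of the reduced $C_{n-1}$, and of the reduced $C_n$ happen to form a triangle, then taking $X$ equal to those three edges gives $w(X)=n-1$ and $c(X)=3$, so $w(X)+c(X)=n+2>n+1$ and the lemma does not apply; more generally, short cycles among the forced edges are the enemy. The heart of the proof is therefore a counting estimate on the excluded pairs: I would classify the obstructing configurations (essentially those $(e,f)$ for which $e$, $f$, and the forced edge of $C_1$ close a short cycle or otherwise over-merge the components measured by $w$) and show, by a Hall/defect-type count, that at most $n(n-1)-\lceil\frac{n+1}{2}\rceil\lfloor\frac{n+1}{2}\rfloor$ pairs are lost. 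An alternative I would keep in reserve is to run a weighted version of the matroid-intersection argument underlying Lemma~\ref{lemabasico}, counting common bases of $M_1$ and $M_2$ directly rather than through the cute reduction; in either route the delicate point is the same, namely certifying that enough representative pairs survive the acyclicity constraint.
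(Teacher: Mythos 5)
Your warm-up step is sound: shrinking one class to a single edge leaves the profile $1,1,2,\dots,n-1$, so Theorem~\ref{cute} applies and yields $n$ distinct trees, one per edge of $C_n$. But your main step has a genuine hole. Shrinking \emph{both} $C_n$ and $C_{n-1}$ to singletons produces the profile $1,1,1,2,\dots,n-2$, which is not cute, and --- as you yourself observe --- the hypothesis of Lemma~\ref{lemabasico} can then fail. Everything now rests on your claim that at most $n(n-1)-\lceil\frac{n+1}{2}\rceil\lfloor\frac{n+1}{2}\rfloor$ pairs $(e,f)$ are obstructed, and you supply no argument for it: you neither classify the obstructing sets $X$ nor carry out the promised Hall/defect count. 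The difficulty is real, not cosmetic. Rerunning the counting from the proof of Theorem~\ref{cute} with three singleton classes gives $|X|\geq 3+\binom{c(X)-1}{2}$ against $|X|\leq\binom{n+2-w(X)}{2}$, which only yields $w(X)+c(X)\leq n+2$ --- exactly one short of what Lemma~\ref{lemabasico} needs. So the obstructions are not limited to the triangle configuration you name: \emph{any} $X$ attaining $w(X)+c(X)=n+2$ blocks the pair, these sets can be large and need not involve the forced edges in a visible short cycle, and bounding the number of blocked pairs over all nice colourings is an unproved (and far from routine) assertion. As written, the proposal is a plan whose central estimate is missing.

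The paper avoids this trap by never creating a third singleton. It fixes an edge $e$ of the \emph{middle} class $C_{\lceil\frac{n+1}{2}\rceil}$, shrinks that class to $\{e\}$, and shrinks $C_n$ not to one edge but to a subset $W\subset C_n$ of size exactly $\lceil\frac{n+1}{2}\rceil$, so that $W$ takes over the vacated middle size and the profile is again the cute $1,1,2,\dots,n-1$; Theorem~\ref{cute} then applies verbatim, no realizability analysis needed. The second degree of freedom comes from iteration rather than prescription: if trees $T_1,\dots,T_t$ through $e$ have colour-$n$ edges $f_1,\dots,f_t$ and $t<\lfloor\frac{n+1}{2}\rfloor$, one can pick $W$ avoiding $f_1,\dots,f_t$ (possible since $t+\lceil\frac{n+1}{2}\rceil\leq n$), and the resulting new tree differs from all previous ones by its colour-$n$ edge. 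This gives $\lfloor\frac{n+1}{2}\rfloor$ trees for each of the $\lceil\frac{n+1}{2}\rceil$ edges of the middle class, with distinctness certified by exactly your (correct) observation that a heterochromatic spanning tree uses one edge per class. Your pair-indexing frame is the right shape, but the pairs must be generated greedily with the large class kept at middle size; prescribing both representatives edge-by-edge, as you do, is where the argument breaks.
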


\begin{proof}
Let $C_1, C_2, \ldots, C_{n}$ be the colour classes of $C$. Without loss of generality assume $\vert C_i \vert  = i$ for $i=1, 2, \ldots n$.

Let $e_1$ be an edge in $C_{\lceil \frac{n+1}{2} \rceil}$. We claim $G$ has at least $\lfloor \frac{n+1}{2} \rfloor $ different heterochromatic spanning trees containing the edge $e_1$. 

First choose an arbitrary subset $W_1$ of $C_{n}$ with size $\lceil \frac{n+1}{2} \rceil$ and let $G_{1,1}$ be the subgraph of $G$ with edge set 
$$E(G_{1,1}) = \left(E(G) \setminus (C_{\lceil \frac{n+1}{2} \rceil} \cup C_{n})\right) \cup (\{e_1\} \cup W_1)$$ 

When restricted to the graph $G_{1,1}$ colouring $C$ is a cute edge-colouring  since $\vert C_1 \vert = \vert \{e_1\} \vert = 1$, $\vert C_i \vert = i $ for $i \neq C_{\lceil \frac{n+1}{2} \rceil}$ and $\vert W_1 \vert = \lceil \frac{n+1}{2} \rceil$. By Theorem \ref{cute}, $G_{1,1}$ has a heterocromatic spanning tree $T_{1,1}$.

Assume $T_{1,1}, T_{1,2}, \ldots , T_{1,t}$ are different heterocromatic spanning trees of $G$ containing edge $e_1$. For $i = 1, 2, \ldots, t$ let $f_i$ be the edge in $T_i$ having colour $n$. If $t < \lfloor \frac{n+1}{2}  \rfloor$, then we can choose $W_{t+1} \subset C_{n}$ with size $\lceil \frac{n+1}{2} \rceil$ containing none of the edges $f_1, f_2, \ldots, f_t$. Let $G_{1,t+1}$ be the subgraph of $G$ with edge set 

$$E(G_{1,t+1}) = \left(E(G) \setminus (C_{\lceil \frac{n+1}{2} \rceil} \cup C_{n})\right) \cup (\{e_1\} \cup W_{t+1})$$ 

As with the case of $G_{1,1}$, when restricted to $G_{1,t+1}$, colouring $C$ is a cute edge-colouring. By Theorem \ref{cute}, $G_{1, t+1}$ has a heterochromatic tree $T_{1,t+1}$. Notice that $T_{1,t+1} \neq T_{1,i}$ for $i=1, 2, \ldots, t$ since $f_i$ is an edge of $T_{1,i}$ and not an edge of $T_{1, t+1}$.

To end the proof, repeat the previous argument for each edge $e_2, e_3, \ldots, e_{_{\lceil \frac{n+1}{2} \rceil}} \in C_{\lceil \frac{n+1}{2} \rceil}$ obtaining different heterochromatic trees $T_{i,j}$ of $G$ with $i=1,2, \ldots, \lceil \frac{n+1}{2} \rceil$ and $j=1, 2, \ldots, \lfloor \frac{n+1}{2}  \rfloor$.
\end{proof}

\section{Heterochromatic spanning trees in beautiful colourings of complete graphs}
\label{sectionbeautiful}

Let $C$ be a nice $n$-edge-colouring of $K_{n+1}$,  $\{C_1, C_2, \dots, C_n\}$ be its colour classes,  where for each $i=1, \dots, n$, $|C_i| =i$. We  denote by $G_i$ the subgraph of $K_{n+1}$ induced by $C_i$.

The edge-coloring $C$ will be called  {\it beautiful} if for  every colour class  $C_{i}$ we have that $G_i$ is acyclic and  there is a partition $\{V_1, V_2\}$ of $V(K_{n+1})$, with $|V_2|= \lceil\frac{n+1}{2}\rceil \geq \lfloor\frac{n+1}{2}\rfloor = |V_1|$,  such that:

\noindent  {\it i)} The subgraph of $K_{n+1}$ induced by all the  colour classes $C_i$, with $i \equiv n$ (mod 2), is isomorphic to the complete bipartite graph with parts $\{V_1, V_2\}$.

\noindent  {\it ii)} For every  colour class $C_{i}$, with $i \equiv n$ (mod 2), we have that  $|V(G_i) \cap V_1| =  \lfloor\frac{|V(G_i)|}{2}\rfloor$ and $|V(G_i) \cap V_2|= \lceil\frac{|V(G_i)|}{2}\rceil$.

\noindent {\it iii)} For every colour class  $C_{i}$,  with $i \not\equiv n$ (mod 2),  we have that 
$|C_i\cap E(K_{n+1}[V_1]) | = \lfloor\frac{i}{2}\rfloor$ and  $|C_{i}\cap E(K_{n+1}[V_2]) |= \lceil\frac{i}{2}\rceil$.

\begin{theorem}\label{beautiful} Let $n\geq 2 $ be an integer and $C$ be a beautiful  $n$-edge coloring of $K_{n+1}$. 
Then in $K_{n+1}$ there are $2^{\lfloor\frac{n-1}{2}\rfloor}$ heterochromatic spanning trees.
\end{theorem}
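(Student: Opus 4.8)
The plan is to show that the $2^{\lfloor (n-1)/2\rfloor}$ required trees come from a family of independent binary choices, and to reduce the existence of a tree realizing each choice to Lemma~\ref{lemabasico}. First I would record the bookkeeping: since $C$ uses $n$ colours and a spanning tree of $K_{n+1}$ has $n$ edges, every heterochromatic spanning tree uses exactly one edge of each colour. Following the definition of beautiful, split the colours into the \emph{cross} colours $A=\{i: i\equiv n \pmod 2\}$, whose edges form the complete bipartite graph with parts $V_1,V_2$ by condition (i), and the \emph{within} colours $B=\{i: i\not\equiv n \pmod 2\}$, whose edges lie inside $V_1$ or inside $V_2$. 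For a heterochromatic spanning tree $T$ and a within colour $i$, the unique edge of colour $i$ in $T$ lies inside $V_1$ or inside $V_2$, so $T$ determines a \emph{side pattern} $\sigma\colon B\to\{1,2\}$. Call a within colour $i$ \emph{free} if it has at least one edge inside each part, i.e. $\lfloor i/2\rfloor\ge 1$ by (iii); the only non-free within colour, present exactly when $n$ is even, is colour $1$, whose single edge lies in $V_2$. A direct count gives exactly $\lfloor (n-1)/2\rfloor$ free colours in both parities. Since two heterochromatic spanning trees with different side patterns are distinct, it suffices to realize each of the $2^{\lfloor(n-1)/2\rfloor}$ side patterns (with colour $1$, when present, sent to $V_2$) by at least one heterochromatic spanning tree.

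To realize a fixed side pattern $\sigma$, I would pass to the spanning subgraph $H_\sigma$ of $K_{n+1}$ consisting of all cross edges together with, for each within colour $i$, only the edges of $C_i$ lying in the part $V_{\sigma(i)}$. By construction every heterochromatic spanning tree of $H_\sigma$ has side pattern $\sigma$, and $H_\sigma$ still carries all $n$ colours (each free colour keeps $\lfloor i/2\rfloor\ge 1$ edges on its chosen side, and the non-free colour $1$ keeps its edge in $V_2$). Thus the task becomes to show that $H_\sigma$ has a heterochromatic spanning tree. Here conditions (ii) and (iii) are decisive: by (iii) the within colours sent to $V_1$ (respectively $V_2$) form a nice colouring of $K_{|V_1|}$ (respectively $K_{|V_2|}$) once the zero-size classes are discarded, since their sizes $\lfloor i/2\rfloor$ (respectively $\lceil i/2\rceil$) run through $1,2,\dots,|V_1|-1$ (respectively $1,2,\dots,|V_2|-1$); moreover the number of within colours that can be sent to a single part never exceeds that part's spanning-tree capacity $|V_j|-1$, as the two bounds in fact coincide. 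This makes the within-forests in each part available and leaves only the problem of joining them with the cross colours.

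The heart of the argument, and the step I expect to be the main obstacle, is to verify the hypothesis of Lemma~\ref{lemabasico} for $H_\sigma$, namely $w(X)+c(X)\le n+1$ for every $X\subseteq E(H_\sigma)$. I would mimic the proof of Theorem~\ref{cute}: bound $|X|$ from below by the sum of the $c(X)$ smallest colour classes present in $H_\sigma$, and from above in terms of $w(X)$ using the structure of $H_\sigma$ (a disjoint union of two cliques joined by a complete bipartite graph), and then compare. The delicate point is the upper bound, since $H_\sigma$ is not complete, and this is exactly where the balance imposed by (ii) is needed: the even splitting of each cross colour class between $V_1$ and $V_2$ translates into a Hall-type condition guaranteeing that one can select one edge from each cross colour to knit the within-forests of $V_1$ and $V_2$ into a single spanning tree without creating cycles, because across any cut separating the components to be joined the available cross colours are numerous enough precisely by the even splitting. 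Establishing this inequality (equivalently, this Hall condition) for all $X$ is the technical crux; granting it, the independent binary choices defining $\sigma$ yield the claimed $2^{\lfloor(n-1)/2\rfloor}$ distinct heterochromatic spanning trees, with the small instances $n=2,3$ checked directly as the base of the count.
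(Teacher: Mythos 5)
Your scaffolding reproduces the paper's architecture almost exactly: the paper also builds, for each of the $2^{\lfloor(n-1)/2\rfloor}$ choices of a side for every ``within'' colour class $C_i$ ($i\geq 2$, $i\not\equiv n \pmod 2$), the spanning subgraph keeping all cross edges and only the chosen-side within edges, shows each such subgraph has a heterochromatic spanning tree via Lemma~\ref{lemabasico}, and gets distinctness because two different choices restrict some within class to disjoint edge sets. Your side-pattern bookkeeping, the count of free colours, and the distinctness argument are all correct. But the proposal stops exactly at the theorem's only substantive claim: that every restricted subgraph $H_\sigma$ satisfies $w(X)+c(X)\leq n+1$ for all $X$. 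You label this ``the technical crux'' and proceed ``granting it,'' so the proof is not complete; worse, the route you sketch for closing it --- mimicking the counting proof of Theorem~\ref{cute} --- demonstrably cannot work. In $H_\sigma$ the within classes are halved: the restricted class of colour $i$ has only about $i/2$ edges, so the sum of the $c(X)$ smallest classes contained in $X$ is only on the order of $c(X)^2/4$, which does not exceed the threshold $\binom{c(X)}{2}\approx c(X)^2/2$ that the Theorem~\ref{cute} comparison against $\binom{n+2-w(X)}{2}$ requires. A pure size-counting argument is therefore too weak for $H_\sigma$, and no Hall-type reformulation of condition (ii) is actually carried out in your text.

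The paper closes this gap by a structural argument you do not have, and a telltale sign is that you never use the acyclicity of the colour classes, which is part of the definition of beautiful and is essential here. Writing $r$ and $s$ for the numbers of within and cross classes wholly contained in $X$ (so $c(X)=r+s$), the paper takes a \emph{maximal} within class $Y_{i_0}\subseteq X$ and a \emph{maximal} cross class $Y_{i_1}\subseteq X$; maximality forces $i_0\gtrsim 2r$ and $i_1\gtrsim 2s$, so $|Y_{i_0}|\gtrsim r$ and $|Y_{i_1}|\gtrsim 2s$. Acyclicity of $G_{i_1}$ gives $|V(G_{i_1})|\geq |Y_{i_1}|+\omega_1$, and the balanced vertex split in condition (ii) then guarantees at least roughly $s$ vertices of $G_{i_1}$ in the part \emph{opposite} the one containing $Y_{i_0}$; attaching to the forest $Y_{i_0}$ one $Y_{i_1}$-edge per such vertex (each of degree $1$ in the resulting graph, hence no cycle arises) produces an acyclic subgraph of $X$ with at least $r+s$ edges, which immediately yields $w(X)\leq n+1-(r+s)=n+1-c(X)$. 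This two-maximal-classes forest construction, not a refined edge count, is the missing idea; without it your argument establishes the combinatorial frame of the theorem but not the theorem itself.
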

\begin{proof} The proof will be done as follows. Consider a spanning subgraph $G$ of $K_{n+1}$ obtained in the following way:  

\noindent {\it 1.-}  For each  colour class $C_i$,  with $i\geq 2$ and $i \not\equiv n$ (mod 2), choose one part  $V_{j_i}$ of the partition  $\{V_1, V_2\}$ of $V(K_{n+1})$, and  let $Y_i = C_i \cap  E(K_{n+1}[V_{j_i}])$.

\noindent {\it 2.-} For $i = 1$ and for $i\geq 2$, with $i \equiv n$ (mod 2), let $Y_i = C_i$. 

\noindent {\it 3.-} Let $E(G) = \bigcup\limits_{i=1}^n Y_i$. 

  Observe that $C$ induces an $n$-edge coloring of $G$. We claim that $G$ contains an heterochromatic spanning tree. 

%\noindent {\bf Claim 1}    $G$ contains an heterochromatic spanning tree. 

\noindent  Let $X$ be a set of edges of $G$ and $A$ be the set of colour classes $Y_i$,  with $i \not\equiv n$(mod 2), such that $Y_i\subseteq X$;  let $B$ be the set of  colour classes $Y_i$,  with $i \equiv n$ (mod 2) such that $Y_i\subseteq X$, and suppose that $|A|= r$ and $|B|= s$.  Observe that $c(X) = r+s$.

 Let $Y_{i_0}\in A$ and $Y_{i_1}\in B$ be of maximal  size, respectively.    Assume first $n$ is even. Then  $i_0$ is odd, and since $|A|= r$ and $Y_{i_0}$ has maximal size, we see that $i_0 \geq 2r-1$ and the size of  $Y_{i_0}$ is  at least $\lfloor\frac{2r-1}{2}\rfloor$ if $Y_{i_0}$ is contained in $V_1$, and size at least  $\lceil\frac{2r-1}{2}\rceil$ if $Y_{i_0}$ is contained in $V_2$. 
 
 Similarily,  we see that $i_1$ is even, and since $|B|= s$ and $Y_{i_1}$ has maximal size, we see that $i_1 \geq 2s$ and therefore the size of  $Y_{i_1}$ is at least $2s$. Moreover, since $G_{i_1}$ is acyclic,   $|V(G_{i_1})| \geq 2s + \omega_{1}$,  where $ \omega_{1}$ is the number of connected components of $G_{i_1}$, and therefore, by  {\it ii)}, $|V_1\cap V(G_{i_1})| = \lfloor\frac{2s+\omega_{1}}{2}\rfloor$ and $|V_2\cap V(G_{i_1})| = \lceil\frac{2s+\omega_{1}}{2}\rceil$. 
 
If $Y_{i_0}$ is contained in $V_1$, let  $\{x_1, \dots, x_{\lceil\frac{2s+\omega_{1}}{2}\rceil}\} = V_2\cap V(G_{i_1})$ and for each $x_i$, choose an edge $e_i \in Y_{i_1}$ such that $x_i$ is incident to $e_i$.  Let $H$ be the subgraph of $G$ induced by $Y_{i_0}$ and $\{e_1, \dots, e_{\lceil\frac{2s+ \omega_{1}}{2}\rceil}\}$. Graph  $H$ has size at least $\lfloor \frac{2r-1}{2}\rfloor+ \lceil\frac{2s+ \omega_{1}}{2}\rceil \geq r+s$, and since $G_{i_0}$ is acyclic and each of the vertices $\{x_1, \dots, x_{\lceil\frac{2s+ \omega_{1}}{2}\rceil}\}$ has degree 1 in $H$, it follows that $H$ is acyclic. Thus,  $|V(H)| \geq r+s+  \omega_{1}$, where $ \omega_{1}$ is the number of connected components of $H$, and therefore  $\omega(X) \leq n+1- (r+s+  \omega_{1}) +  \omega_{1} = n+1-(r+s)$. Since $c(X)= r+s$, by Lemma \ref{lemabasico} we see that $G$ contains an heterochromatic spanning tree.   

If $Y_{i_0}$ is contained in $V_2$, let  $\{y_1, \dots, y_{\lfloor\frac{2s+ \omega_{1}}{2}\rfloor}\} = V_1\cap V(G_{i_1})$ and for each $y_i$, choose an edge $e_i \in Y_{i_1}$ such that $y_i$ is incident to $e_i$.  Let $H$ be the subgraph of $G$ induced by $Y_{i_0}$ and $\{e_1, \dots, e_{\lfloor\frac{2s+ \omega_{1}}{2}\rfloor}\}$. Observe that since $Y_{i_0}$ is contained in $V_2$, $Y_{i_0}$ has size at least  $\lceil\frac{2r-1}{2}\rceil$, thus  $H$ has size at least $\lceil \frac{2r-1}{2}\rceil+ \lfloor\frac{2s+ \omega_{1}}{2}\rfloor \geq r+s$. From here,  as in the previous case, we see that $G$ contains an heterochromatic spanning tree. 

For the case when  $n$ is odd,  it follows that $i_0$ is even, and since $|A|= r$ and $Y_{i_0}$ has maximal size, we see that $i_0 \geq 2r$ and the size of  $Y_{i_0}$ is at least $r$ (either if it is contained in $V_1$ or $V_2$).   Similarily,  $i_1$ is odd, and since $|B|= s$ and $Y_{i_1}$ has maximal size, we see that $i_1 \geq 2s-1$ and therefore the size of  $Y_{i_1}$ is at least $2s-1$. Moreover, since $G_{i_1}$ is acyclic, $|V(G_{i_1})| \geq 2s-1+\omega_{1}$, where $\omega_{1}$ is the number of connected components of $G_{i_1}$, and therefore, by  {\it ii)},  $|V_1\cap V(G_{i_1})| = \lfloor\frac{2s-1+\omega_{1}}{2}\rfloor$ and $|V_2\cap V(G_{i_1})| = \lceil\frac{2s-1+\omega_{1}}{2}\rceil$.   From here, in an analogous way as in the case when $n$ is even, we see that $G$ contains an heterochromatic spanning tree and the claim follows.

Since  there are $\lfloor\frac{n-1}{2}\rfloor$ colour classes $C_i$,  with $i\geq 2$ and $i \equiv n$ (mod 2), it follows there are $2^{\lfloor\frac{n-1}{2}\rfloor}$ different ways to obtain a subgraph $G$ which, by our claim, contains an heterochromatic spanning tree. Moreover, given any pair $G_1, G_2$ of these type of  subgraphs, by construction, there is at least one  colour class $C_i$,  with $i\geq 2$ and $i \equiv n$ (mod 2), such that $\left(E(G_1)\cap C_i\right)\cap \left(E(G_2)\cap C_i\right)= \emptyset$. Thus,  the heterochromatic spanning trees in $G_1$ and in  $G_2$ are different, and from here, the result follows. 
\end{proof}

\begin{corollary}\label{final}  Let $n\geq 2 $ be an integer and $C$ be the graceful colouring of $K_{n+1}$. 
Then in $K_{n+1}$ there are $2^{\lfloor\frac{n-1}{2}\rfloor}$ heterochromatic spanning trees.
\end{corollary}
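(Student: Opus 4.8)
The plan is to show that the graceful colouring is beautiful and then invoke Theorem~\ref{beautiful} directly. First I would fix the correspondence between the graceful colour classes and the indexing used in the definition of a beautiful colouring. Since the graceful colouring assigns to the edge $uv$ the value $|\gamma(u)-\gamma(v)|$, the class of edges of difference $d$ has size $n-d+1$; reindexing so that $C_i$ denotes the class of size $i$ amounts to setting $C_i$ equal to the set of edges of difference $d=n-i+1$, which makes the colouring nice. Moreover, for a fixed difference $d$ the edges $\{k,k+d\}$ with $0\le k\le n-d$ form a disjoint union of paths (grouping the labels by residue modulo $d$), so each $G_i$ is acyclic, as required.

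Next I would choose the bipartition: let $V_1$ be the set of odd-labelled vertices and $V_2$ the set of even-labelled vertices. Among $\{0,1,\dots,n\}$ there are $\lceil\frac{n+1}{2}\rceil$ even labels and $\lfloor\frac{n+1}{2}\rfloor$ odd labels, so $|V_2|\ge|V_1|$ as demanded. The observation driving everything is a parity dichotomy: an edge of difference $d$ joins vertices of the same parity exactly when $d$ is even, and of opposite parity exactly when $d$ is odd. Since $i\equiv n\pmod 2$ is equivalent to $d=n-i+1$ being odd, the classes singled out in conditions {\it i)} and {\it ii)} are precisely the odd-difference (cross) classes, while condition {\it iii)} concerns the even-difference (within-part) classes. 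Condition {\it i)} is then immediate: every pair consisting of one even and one odd label has odd difference and conversely, so the union of all odd-difference classes is exactly the complete bipartite graph between $V_1$ and $V_2$.

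For condition {\it iii)} I would count directly: for even $d$ the edge $\{k,k+d\}$ lies inside $V_2$ when $k$ is even and inside $V_1$ when $k$ is odd, so it suffices to count the even and odd values of $k$ in the range $\{0,1,\dots,n-d\}=\{0,1,\dots,i-1\}$; this gives $\lceil\frac{i}{2}\rceil$ edges within $V_2$ and $\lfloor\frac{i}{2}\rfloor$ within $V_1$, matching the requirement. The step I expect to be the main obstacle is condition {\it ii)}. For odd $d$ the vertices isolated in $G_i$ are exactly those in the interval $\{n-d+1,\dots,d-1\}$, which is nonempty only when $d>n/2$ and is symmetric about $n/2$; I would carry out a short case analysis on the parity of $n$ and on the parities of the interval's endpoints to show that removing it leaves $V(G_i)$ split between $V_1$ and $V_2$ as evenly as possible, with any extra vertex landing in $V_2$. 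This is precisely condition {\it ii)}. Having verified that the graceful colouring is beautiful, the corollary follows at once from Theorem~\ref{beautiful}.
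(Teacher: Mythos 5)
Your proposal is correct and follows essentially the same route as the paper: reindexing the difference classes so that $C_i=D_{n+1-i}$, taking $V_2$ to be the even-labelled and $V_1$ the odd-labelled vertices, verifying conditions \emph{i)}--\emph{iii)} via the parity dichotomy for odd/even differences, and invoking Theorem~\ref{beautiful}. Your case analysis for condition \emph{ii)} (the interval $\{n-d+1,\dots,d-1\}$ of isolated vertices, symmetric about $n/2$) is exactly the check the paper leaves to the reader, so nothing is missing.
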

\begin{proof}  
By Theorem \ref{beautiful} we only need to show that $C$  is beautiful. Let $V(K_{n+1}) =  \{v_0, v_1, \dots, v_n\}$, and, for each $i\in \{1, 2, \dots, n\}$, let $D_i = \{v_sv_t : |t-s|=i\}$, that is, $D_i$ denotes the set of edges coloured with $i\in \{1, 2, \dots, n\}$.  Since $D_i$ contains $n+1-i$ edges, we see that $D_i = C_{n+1-i}$ and,  it is not difficult to see that,  $G_{n+1-i}$ is acyclic. 

Assume first $n$ is even. Let $V_2=\{v_0, v_2, \dots, v_n\}$ and $V_1=\{v_1, v_3, \dots, v_{n-1}\}$. It is easy to see that the subgraph induced by $D_1 \cup D_3\cup\dots \cup D_{n-1}$ (that is, the subgraph induced by the union of the colour classes $C_{n+1-i}$, with $(n+1-i)\equiv n$ (mod 2)) is the complete bipartite graph with partite sets $V_1$ and $V_2$.  Hence (i) holds.  Moreover, given $D_j$, with $j$ odd,  $V_2 \cap V(G_{n+1-j})= \{v_0, \dots, v_{n-1-j}\}\cup \{v_{1+j},\dots, v_n\}$ and  $V_1 \cap V(G_{n+1-j})= \{v_1,\dots, v_{n-j}\}\cup \{v_{j},\dots, v_{n-1}\}$. Considering the cases whenever $j\leq n-j$  or not, it is not hard to see that (ii) holds.

Finally, given an even integer $2\leq j\leq n$, it is not hard to see that $D_j$ satisfies $|D_j\cap E(K_{n+1}[V_2])| = \frac{n+2-j}{2}$ and  $|D_j\cap E(K_{n+1}[V_1])| = \frac{n-j}{2}$. Thus, for each colour class $C_{n+1-j}$, with $(n+1-j)\not\equiv n$ (mod 2), we have that  $|C_{n+1-j}\cap E(K_{n+1}[V_2])| = \frac{n+2-j}{2}$ and  $|C_{n+1-j}\cap E(K_{n+1}[V_1])| = \frac{n-j}{2}$, and  (iii) holds. 

For  the case where $n$ is odd, let $V_2=\{v_0, v_2, \dots, v_{n-1}\}$ and $V_1=\{v_1, v_3, \dots, v_{n}\}$. As in the case where $n$ is even, we can show that $D_i$ (and so $C_{n+1-i}$), with $1\leq i \leq n$, satisfay the statements (i), (ii), and (iii).  Therefore, $C$  is beautiful and the corollary follows. \end{proof}

\section{Further research; heterochromatic trees in edge-colourings of bipartite graphs}
\label{sectionbipartite}

Analogous results can be found in other classes of graphs beside the complete graph using exactly the same technics. For example, we can proceed we bipartite graphs as follows. A $(2m-1)$-edge-colouring of the complete bipartite graph $K_{m,m}$ is a \emph{nice} edge-colouring if the colour classes have sizes $1, 1, 2, 2, \ldots m-1, m-1, m$. Let $G_{m,m}$ be a spanning subgraph of $K_{m,m}$ with $1 + 2{m \choose 2}$ edges.  A $(2m-1)$-edge colouring of $G_{m,m}$ is a \emph{cute} edge-colouring if the chromatic clases have sizes $1, 1, 1,2,2,\ldots, m-1, m-1$.

With the same technics as in the previous section we can prove the following results:

\begin{theorem}
\label{bipartitecute}
If $C$ is a cute edge-colouring of a subgraph $G_{m,m}$ of $K_{m,m}$ with $1 + 2{m \choose 2}$ edges, then $G$ has a heterochromatic spanning tree.
\end{theorem}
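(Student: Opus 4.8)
The plan is to imitate the proof of Theorem \ref{cute}, verifying the hypothesis of Lemma \ref{lemabasico} for $G = G_{m,m}$. Since $G$ has $2m$ vertices and the cute colouring uses $2m-1$ colours, setting $n = 2m-1$ reduces the task to checking that $w(X) + c(X) \le n+1 = 2m$ for every $X \subseteq E(G)$. As in the complete case I would sandwich $|X|$ between an upper bound coming from $w(X)$ and a lower bound coming from $c(X)$, the only structural change being that binomial coefficients get replaced by the balanced product $p(\ell) := \lfloor \ell/2\rfloor\lceil \ell/2\rceil$, which is the maximum number of edges of a bipartite graph on $\ell$ vertices.

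For the upper bound, I would write the connected components of the spanning subgraph with edge set $X$ as bipartite graphs with $a_i$ vertices in one part and $b_i$ in the other, so that $|X| \le \sum_i a_i b_i$ with $\sum_i a_i = \sum_i b_i = m$ and with $w(X)$ components in total. Merging two components replaces $a_1 b_1 + a_2 b_2$ by $(a_1+a_2)(b_1+b_2)$ and hence never decreases the sum; consequently, for a fixed number of components the sum is maximised by concentrating all edges into a single component and leaving the other $w(X)-1$ vertices isolated. This gives $|X| \le p(s)$ with $s := 2m + 1 - w(X) = n+2-w(X)$, the bipartite analogue of the estimate $|X| \le \binom{n+2-w(X)}{2}$ used in Theorem \ref{cute}.

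For the lower bound, if $c(X)$ colour classes are fully contained in $X$ then $|X|$ is at least the sum of the $c(X)$ smallest class sizes. The clean computation here is that, for the multiset of sizes $\{1,1,1,2,2,\dots,m-1,m-1\}$, the sum of the $c$ smallest sizes equals $p(c)+1$: the $k$-th smallest size is $1$ for $k=1$ and $\lceil (k-1)/2\rceil$ for $k\ge 2$, and $1+\sum_{k=2}^{c}\lceil (k-1)/2\rceil = 1+p(c)$. Hence $|X| \ge p(c(X)) + 1 > p(c(X))$ whenever $c(X)\ge 1$ (the case $c(X)=0$ makes $w(X)+c(X)=w(X)\le 2m$ trivially).

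Combining the two bounds yields $p(s) \ge |X| \ge p(c(X))+1 > p(c(X))$; since $p$ is nondecreasing on the nonnegative integers this forces $s > c(X)$, that is $w(X) + c(X) \le n+1$, and Lemma \ref{lemabasico} then produces a heterochromatic spanning tree. I expect the main (though routine) obstacle to be the bipartite extremal estimate: one must confirm that concentrating the edges into one component is optimal while respecting the constraints $a_i\le m$ and $b_i\le m$. This constraint is in fact never binding, because $s = 2m+1-w(X)\le 2m$, so the balanced split $a=b=s/2\le m$ is always attainable; but it is exactly the point that most needs care when transcribing the binomial argument of Theorem \ref{cute} into the bipartite setting.
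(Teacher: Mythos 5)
Your proposal is correct, and it is exactly the argument the paper intends: Section~\ref{sectionbipartite} gives no explicit proof, stating only that Theorem~\ref{bipartitecute} follows ``with the same technics'' as Theorem~\ref{cute}, and your adaptation --- sandwiching $|X|$ between $p(n+2-w(X))$ and $p(c(X))+1$ with $p(\ell)=\lfloor \ell/2\rfloor\lceil \ell/2\rceil$ replacing $\binom{\ell}{2}$, then invoking Lemma~\ref{lemabasico} with $n=2m-1$ --- is precisely that technique transcribed to the bipartite setting. You also correctly identify and dispose of the one genuine subtlety (that the balanced split $\lceil s/2\rceil\leq m$ is feasible since $s\leq 2m$), so the proof is complete at the same level of rigour as the paper's own proof of Theorem~\ref{cute}.
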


\begin{remark}
\label{bipartiteunique}
For each tree $T$ with $2m$ v\'ertices there is a spanning bipartite supergraph $F(T)$ with $1 + 2{m \choose 2}$ edges and a cute edge-colouring of $F(T)$ for which $T$ is the unique heterochromatic spanning tree.
\end{remark}

\begin{theorem}
\label{bipartitenice}
If $C$ is a nice edge-colouring of a complete bipartite graph $K_{m,m}$, then $K_{m,m}$ contains at least $(\frac{m}{2})(\frac{m+2}{2})$ heterocromatic spanning trees if $m$ is even and at least $(\frac{m+1}{2})^2$ heterochromatic spanning trees if $m$ is odd.
\end{theorem}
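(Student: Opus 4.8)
The plan is to imitate the proof of Theorem~\ref{nice}, replacing the complete graph by $K_{m,m}$ and Theorem~\ref{cute} by Theorem~\ref{bipartitecute}. Write the colour classes of the nice colouring $C$ as two classes $C_i^{(1)},C_i^{(2)}$ of each size $i=1,\dots,m-1$ together with a single class $C_m$ of size $m$. Set $k=\lceil m/2\rceil$ and fix attention on one of the two classes of size $k$, say $C_k^{(2)}$, and on the large class $C_m$.

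First I would set up the basic reduction. For an edge $e\in C_k^{(2)}$ and a subset $W\subseteq C_m$ with $|W|=k$, let $G(e,W)$ be the spanning subgraph of $K_{m,m}$ with edge set $\bigl(E(K_{m,m})\setminus(C_k^{(2)}\cup C_m)\bigr)\cup\{e\}\cup W$. The induced colour classes are the original ones, except that $C_k^{(2)}$ is shrunk to the single edge $e$ and $C_m$ is shrunk to $W$. A short count of sizes shows that the class of size $m$ disappears while the number of classes of size $1$ rises from two to three and the number of classes of size $k$ returns to two, so the induced class sizes are $1,1,1,2,2,\dots,m-1,m-1$; moreover $G(e,W)$ has $m^2-(k+m)+(1+k)=m^2-m+1=1+2\binom{m}{2}$ edges. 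Hence the induced colouring is \emph{cute} and, by Theorem~\ref{bipartitecute}, $G(e,W)$ contains a heterochromatic spanning tree. Being spanning it uses exactly one edge of each of the $2m-1$ colours; in particular its edge of colour $C_k^{(2)}$ is $e$ and its edge of colour $C_m$ lies in $W$.

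Then I would count exactly as in Theorem~\ref{nice}. Fix $e\in C_k^{(2)}$ and proceed greedily: if trees $T_1,\dots,T_t$ with colour-$C_m$ edges $f_1,\dots,f_t$ have already been produced and $t<m-k+1$, then $|C_m\setminus\{f_1,\dots,f_t\}|=m-t>k-1$, so a fresh $k$-subset $W_{t+1}\subseteq C_m$ avoiding $f_1,\dots,f_t$ exists, and the resulting tree has a colour-$C_m$ edge distinct from all previous ones, hence is new. This gives $m-k+1$ distinct trees for each $e$. Trees arising from different choices of $e$ are automatically distinct, since their colour-$C_k^{(2)}$ edge records which $e$ was used, so altogether we obtain $k(m-k+1)$ heterochromatic spanning trees. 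Substituting $k=\lceil m/2\rceil$ gives $k(m-k+1)=\frac{m}{2}\cdot\frac{m+2}{2}$ when $m$ is even and $\bigl(\frac{m+1}{2}\bigr)^2$ when $m$ is odd, which is the claimed bound; the choice $k=\lceil m/2\rceil$ is exactly the maximiser of the concave function $k\mapsto k(m-k+1)$.

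The only routine obstacle is the size bookkeeping certifying that the reduced colouring is exactly cute, together with the elementary inequality $m-t>k-1$ guaranteeing a new subset $W_{t+1}$ at each step; these are the bipartite analogues of the checks in Theorem~\ref{nice}. The one genuinely separate point is the degenerate case $m=2$, where $k=1$ and no size-$k$ class can be collapsed to create an extra singleton; there the stated bound is $2$, and one simply collapses $C_m$ to a single edge in each of its two possible ways, obtaining two cute colourings of $3$-edge subgraphs and hence two distinct heterochromatic spanning trees, so the theorem still holds.
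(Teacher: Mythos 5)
Your proposal is correct and is essentially the argument the paper intends: the paper states Theorem~\ref{bipartitenice} without proof, saying only that it follows by the same technique as Theorem~\ref{nice}, and your adaptation (collapse one size-$\lceil m/2\rceil$ class to a single edge and $C_m$ to a $\lceil m/2\rceil$-subset, apply Theorem~\ref{bipartitecute} to the resulting cute colouring, then count greedily via the colour-$C_m$ edge and separate the cases by the colour-$C_k^{(2)}$ edge) carries this out faithfully, with the correct total $k(m-k+1)$ matching both stated bounds. Incidentally, your special-casing of $m=2$ is unnecessary: there the collapse of $C_1^{(2)}$ is vacuous, but the size bookkeeping $\{1,1,2\}\setminus\{1,2\}\cup\{1,1\}=\{1,1,1\}$ still yields exactly a cute colouring, so the general argument already covers it.
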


\end{document}